 \def\Alt{\mathop{\rm Alt}\nolimits}
\newcommand{\C}{\mathbb{C}}    
\newcommand{\lhand}[2]{{\footnotesize\begin{array}{r} #1 \\ #2\end{array}  }\!}
\begin{document}

\title{Hurwitz generation of the universal covering of $\Alt(n)$}

\author{M. A. Pellegrini}
\affil{Dipartimento di Matematica e Applicazioni, Universit\`a degli Studi di Milano-Bicocca, Via R. Cozzi, 53, 20125 Milano Italy}
\email{marco.pellegrini@unimib.it}
\author{M. C. Tamburini} 
\affil{Dipartimento di Matematica e Fisica, Universit\`a Cattolica del Sacro Cuore, \\via Musei, 41, 25121 Brescia Italy}
\email{c.tamburini@dmf.unicatt.it}

\dedication{Dedicated to John S. Wilson on the occasion of his 65-th birthday}

\authorrunninghead{ }
\titlerunninghead{ }

\abstract{We prove that the universal covering of an alternating group $\Alt(n)$ which is Hurwitz is still Hurwitz, with 
$31$ exceptions, $30$ of which are detectable by the genus formula.}

\section{Introduction}

A finite group is Hurwitz if it can be generated by two elements of respective orders $2$ and $3$,
whose product has order 7. 
In \cite{C}  M. Conder  has constructed a $(2,3,7)$-generating triple of the alternating group $\Alt(n)$,  
for all $n>167$, and has indicated the exact values of $n\leq 167$ for which $\Alt(n)$ is Hurwitz:
they are displayed in Table 1 below.
This result was a key step in the field, and allowed further progress, e.g., the discovery that
very many linear groups over f.g. rings are $(2,3,7)$-generated (see \cite{LT}, \cite{LTW} and
 \cite{V}, for example).
In this paper, applying Conder's method, we prove the following:

\begin{theorem}\label{main}
The universal covering $\widetilde{\Alt(n)}$ of an alternating group $\Alt(n)$ which is Hurwitz, 
is still Hurwitz, except for the following values of $n$:
$$\begin{array}{ccccccccccc}
15 & 21 & 22 & 29 & 37 & 45 &  52& 71 & 79& 86& 87 \\
 94 &  101 & 102& 109& 116 & 117 & 124 & 132 & 143 & 151 & 158 \\
159 & 166 & 173&174&181 &188&215&223 &230
\end{array}$$
\end{theorem}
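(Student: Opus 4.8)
The plan is to reduce the lifting problem to a single combinatorial condition on the cycle type of the involution, and then to realise or obstruct that condition using Conder's coset-diagram method together with the Riemann--Hurwitz genus formula.

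First I would record the structure of the cover. For $n\ge 5$ with $n\ne 6,7$ the universal covering is the double cover $2\cdot\Alt(n)$, a non-split central extension by $Z=\langle z\rangle$ of order $2$; the two cases $n=6,7$, where the multiplier is larger, are disposed of separately, using that $\Alt(6)$ is not Hurwitz. Given a $(2,3,7)$-triple $x,y$ of $\Alt(n)$ with $x^2=y^3=(xy)^7=1$, I want lifts $\tilde x,\tilde y\in 2\cdot\Alt(n)$ of the same orders whose product still has order $7$. Since $3$ is odd, exactly one preimage of $y$ has order $3$, and I choose it. Since $7$ is odd, the two preimages $\tilde x\tilde y$ and $z\tilde x\tilde y$ of the order-$7$ element $xy$ have orders $7$ and $14$ in some order, so after possibly replacing $\tilde x$ by $z\tilde x$ I may assume $(\tilde x\tilde y)^7=1$; crucially this replacement does not change $\tilde x^2=(z\tilde x)^2$. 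Hence the only genuine obstruction is whether $x$ itself lifts to an involution. By the classical spin-cover computation, a product of $t$ disjoint transpositions lifts to an element of order $2$ precisely when $\binom{t}{2}$ is even; as $t$ is even for $x\in\Alt(n)$, this means $t\equiv 0\pmod 4$. Finally $\langle\tilde x,\tilde y\rangle$ surjects onto $\Alt(n)$ and cannot be a complement, since $2\cdot\Alt(n)$ is perfect and the extension is non-split, so it is all of $2\cdot\Alt(n)$. This proves the key reduction: $\widetilde{\Alt(n)}$ is Hurwitz if and only if $\Alt(n)$ admits a $(2,3,7)$-generating triple whose involution is a product of $t\equiv 0\pmod 4$ transpositions.

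Next I would attack the positive cases constructively. Applying Conder's method, one builds a transitive $(2,3,7)$-permutation representation by amalgamating standard coset-diagram pieces along their fixed points; the freedom in the choice and number of pieces lets one prescribe the cycle type of $x$, and in particular the residue of $t$ modulo $4$. For each Hurwitz value of $n$ not in the exceptional list I would exhibit such an amalgamation realising $t\equiv 0\pmod 4$; for all sufficiently large $n$ this should be uniform within each residue class of $n$ modulo $42=\mathrm{lcm}(2,3,7)$, after which only finitely many smaller $n$ need individual diagrams. The bookkeeping here is the bulk of the work, but each step is a routine verification that a displayed pair of permutations has the asserted orders and generates $\Alt(n)$.

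The obstruction for the exceptional $n$ comes from the genus formula. Writing $x,y,xy$ as products of $t_2$ transpositions, $t_3$ triples and $t_7$ septuples, Riemann--Hurwitz for the associated degree-$n$ cover of the sphere gives the genus
\[
\gamma \;=\; 1 - n + \tfrac{t_2}{2} + t_3 + 3t_7 ,
\]
so that $t_2\equiv 0\pmod 4$ is equivalent to $\gamma+n+t_3+t_7$ being odd. For the thirty listed values I expect the constraints $f_2,f_3,f_7\ge 0$ on the fixed-point numbers, together with transitivity of the triple, to confine the admissible profiles $(\gamma,t_3,t_7)$ to a range in which this parity necessarily fails, so that no lift exists. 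The hard part, and where I expect the real difficulty, is twofold: proving that the constructions of the previous paragraph genuinely cover \emph{every} non-exceptional $n$, and handling the single remaining exception, which is invisible to the genus formula and must be eliminated by an exhaustive check of all generating triples of the relevant genus.
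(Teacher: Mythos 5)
Your opening reduction is correct and is essentially the paper's Corollary~\ref{mainobserv}: the spin-cover parity criterion you invoke ($\binom{t}{2}$ even, hence $t\equiv 0\pmod 4$ for even $t$) is exactly Theorem~\ref{Asch} (Aschbacher's Proposition 33.15), and your adjustment $\tilde x\mapsto z\tilde x$ to fix the order of the product, plus perfectness/non-splitness to force generation, mirrors the paper's argument. Likewise your Riemann--Hurwitz bookkeeping is equivalent to the paper's use of \eqref{Conder}: your identity $\gamma=1-n+\tfrac{t_2}{2}+t_3+3t_7$ rearranges to $n=84(\gamma-1)+21f_2+28f_3+36f_7$, and the lifting condition $t_2\equiv 0\pmod 4$ becomes $f_2\equiv n\pmod 8$, so showing infeasibility for the $30$ listed values is the same computation that underlies inequality~\eqref{one}. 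Up to this point you have rediscovered the paper's skeleton.

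However, two genuine gaps remain, and they are precisely where the paper's actual content lies. First, in the positive direction you assert that ``the freedom in the choice and number of pieces lets one prescribe the cycle type of $x$, and in particular the residue of $t$ modulo $4$,'' but this is the crux, not a freedom one gets for free: altering the involution to fix its parity can destroy both generation and the useful $p$-cycle needed for Jordan's theorem. The paper resolves this with Lemma~\ref{modification}, producing a modified diagram $G'$ in which $x'=x(14,32)(15,33)$ has two more transpositions than $x$ while $(x',y)$ stays conjugate to $(x,y)$, so the primitivity-plus-$p$-cycle argument survives the parity correction; even with this device, roughly a hundred residual degrees require the ad hoc composite diagrams of Section~\ref{pos} (Tables 2 and 3, the $(2)$-handle constructions for $n=65,72$, and explicit generators for $n=56,96$). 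Your ``routine verification'' deferral thus hides the bulk of the proof, and your claim of uniformity modulo $42$ also misstates the induction: the paper works modulo $14$, via the diagrams $H_0,\dots,H_{13}$ glued to copies of $G$ or $G'$. Second, for $n=21$ your proposal to eliminate the surviving profile (genus $0$, cycle types $2^8 1^5$, $3^7$, $7^3$) by ``an exhaustive check of all generating triples'' is not an argument --- the relevant classes are far too large for a hand check, and the paper expressly claims computer-independent proofs. The paper instead applies Scott's formula to the symmetric square $S$ of the $20$-dimensional constituent of the permutation module: any such triple would satisfy $d^x_S+d^y_S+d^{xy}_S\leq \frac{20\cdot 21}{2}+2=212$, while the forced cycle types give $d^x_S+d^y_S+d^{xy}_S\geq 114+70+30=214$, a contradiction. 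This cohomological step, with no analogue in your outline, is the one idea your plan is missing rather than merely postponing.
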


Apart from the case $n=21$, these exceptions are due to the failure of inequality (\ref{one}).
This inequality follows from Scott's formula \cite{S} or, equivalently, from the genus formula
\cite[Corollary page 82]{C1}: 
\begin{equation}\label{Conder}
n=84(g-1)+21r+28s+36t
\end{equation}
where $g\geq 0$, and $r,s,t$ are the numbers of fixed points of a Hurwitz generating 
triple of a transitive group of degree $n$. Hence these formulas essentially discriminate
the alternating group from its universal covering.

For each degree $n$ with positive answer, we exhibit a $(2,3,7)$-generating triple
of $\widetilde{\Alt(n)}$, up to an element of its center. 
Comparison of our generators with those of Conder provides further evidence that the same alternating group 
may well admit non-conjugate Hurwitz generators. 

Our proofs are computer independent, but the algebraic  softwares Magma and GAP have been of invaluable help.  \\

\bigskip
{\bf Acknowledgements} We are very grateful to A. Zalesskii, who suggested this problem, predicting its answer in connection to Scott's formula. We are also indebted to M. Conder who provided  Hurwitz generators for $\Alt(96)$. 

\section{Preliminary results}
For the definition and general properties of universal coverings we refer to the book of M. Aschbacher \cite [Section 33]{A}. To our purposes it is enough to recall that, for $n \geq 8$, $\widetilde{\Alt(n)}$ is perfect,  its center $\widetilde{Z}$ has order 2 
and the factor group $\widetilde{\Alt(n)}/\widetilde{Z}$ is isomorphic to $\Alt(n)$.

\begin{theorem}\label{Asch} Let $\widetilde x$ be a $2$-element in the universal covering $\widetilde{\Alt(n)}$, whose image  
$x$ in $\Alt(n)$ is an involution. Then $\widetilde x$ has order $2$ if $x$ is the product of
$4k$ cycles, has order $4$ if $x$ is the product of
$4k+2$ cycles. 
\end{theorem}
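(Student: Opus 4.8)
The plan is to realise the universal covering concretely inside a spin group and to reduce the whole statement to a single sign computation in a Clifford algebra. First note that, since $x^2=1$, any preimage $\widetilde x$ of $x$ satisfies $\widetilde x^{\,2}\in\widetilde Z=\{1,z\}$; in particular $\widetilde x^{\,4}=1$, so $\widetilde x$ is automatically a $2$-element, and its order is $2$ or $4$ according as $\widetilde x^{\,2}=1$ or $\widetilde x^{\,2}=z$. Moreover the value of $\widetilde x^{\,2}$ is independent of the chosen preimage, because $(z\widetilde x)^2=z^2\widetilde x^{\,2}=\widetilde x^{\,2}$. Writing $x$ as a product of $\ell$ disjoint transpositions (with $\ell$ even, as $x\in\Alt(n)$), the theorem then amounts to showing that $\widetilde x^{\,2}=z$ exactly when $\ell\equiv 2\pmod 4$.

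To compute this square I would work inside the Clifford algebra $C_n$ over $\R$ on generators $e_1,\dots,e_n$ with $e_i^2=1$ and $e_ie_j=-e_je_i$ for $i\neq j$, together with its spin group $\mathrm{Spin}(n)$ double covering $\mathrm{SO}(n)$. Embedding $\Alt(n)$ in $\mathrm{SO}(n)$ by permutation matrices, the key observation is that the transposition $(i\,j)$ is the reflection fixing the hyperplane orthogonal to $e_i-e_j$, and is represented in $\mathrm{Pin}(n)$ by the unit vector $v_{ij}=\tfrac1{\sqrt2}(e_i-e_j)$, which satisfies $v_{ij}^2=1$ and induces $(i\,j)$ by twisted conjugation. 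The preimage $\widehat{\Alt(n)}$ of $\Alt(n)$ in $\mathrm{Spin}(n)$ is then a central extension of $\Alt(n)$ by the kernel $\{\pm1\}$ of the covering, the nontrivial central element being $-1$.

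Now let $x=(a_1b_1)\cdots(a_\ell b_\ell)$ have disjoint supports and set $V=v_{a_1b_1}\cdots v_{a_\ell b_\ell}$, a preimage of $x$ lying in $\mathrm{Spin}(n)$ because $\ell$ is even. As the supports are disjoint, the vectors $v_{a_rb_r}$ involve pairwise disjoint sets of generators, hence are orthogonal: they anticommute and square to $1$. The heart of the matter is the elementary identity
\[
(v_1\cdots v_\ell)^2=(-1)^{\binom{\ell}{2}}
\]
for pairwise anticommuting unit vectors $v_1,\dots,v_\ell$, proved by moving the final factor leftwards through the second copy of the product (picking up $(-1)^{\ell-1}$) and inducting on $\ell$. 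Thus $V^2=(-1)^{\binom{\ell}{2}}$. Taking $\ell=2$ already gives $V^2=-1$, so $\widehat{\Alt(n)}$ is a non-split central extension; since $\Alt(n)$ is perfect with Schur multiplier of order $2$ for $n\geq 8$, this forces $\widehat{\Alt(n)}\cong\widetilde{\Alt(n)}$ with $-1\leftrightarrow z$. Transporting the computation, $\widetilde x^{\,2}=z^{\binom{\ell}{2}}$: when $\ell=4k$ the exponent $\binom{\ell}{2}=2k(4k-1)$ is even and $\widetilde x$ has order $2$, while when $\ell=4k+2$ the exponent $\binom{\ell}{2}=(2k+1)(4k+1)$ is odd and $\widetilde x$ has order $4$, as claimed.

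I expect the main obstacle to be not the sign computation, which is immediate, but the correct identification of $\widehat{\Alt(n)}$ with the universal covering: one must fix the $\mathrm{Pin}$ convention so that transpositions genuinely lift to elements squaring to $+1$, check that $v_{ij}$ induces $(i\,j)$, and confirm that the nontrivial kernel element $-1$ corresponds to the generator $z$ of $\widetilde Z$. A purely combinatorial alternative avoids spin groups altogether and argues from Schur's presentation of the double cover on the lifts of the adjacent transpositions, deducing the same square directly from the braid and commutation relators; this replaces the geometric identification by careful bookkeeping with the relations, but produces the identical factor $(-1)^{\binom{\ell}{2}}$.
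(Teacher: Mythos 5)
Your proof is correct, but note that the paper itself contains no argument to compare against: it quotes this statement verbatim from Proposition 33.15 of Aschbacher's book and immediately moves on, so you have supplied a proof where the authors supply only a citation. Your spin-group route is a clean and standard way to establish Aschbacher's result, and every step checks out: the reduction to computing $\widetilde x^{\,2}\in\widetilde Z$ independently of the preimage is right; with the convention $e_i^2=1$ the lift $v_{ij}=\tfrac1{\sqrt2}(e_i-e_j)$ does satisfy $v_{ij}^2=1$ and induces $(i\,j)$ under the twisted adjoint action; disjoint supports give pairwise anticommuting unit vectors, and the identity $(v_1\cdots v_\ell)^2=(-1)^{\binom{\ell}{2}}$ follows exactly by the induction you sketch, since $\binom{\ell}{2}=\binom{\ell-1}{2}+(\ell-1)$. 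The parities $\binom{4k}{2}=2k(4k-1)$ even and $\binom{4k+2}{2}=(2k+1)(4k+1)$ odd then give precisely the dichotomy in the statement. The identification step is the only place needing care, and you handle it correctly: the case $\ell=2$ shows the Spin preimage is non-split, and since $\Alt(n)$ is perfect with $H_2(\Alt(n))\cong\Z/2$ for $n\geq 8$, universal coefficients give $H^2(\Alt(n),\Z/2)\cong\Z/2$, so there is a unique non-split central extension by $\Z/2$, which must be $\widetilde{\Alt(n)}$ with $-1\leftrightarrow z$. The restriction $n\geq 8$ is harmless here, as it is the paper's standing hypothesis (and all Hurwitz $\Alt(n)$ have $n\geq 15$); for $n=6,7$, where the center has order $6$, your computation would factor through the double cover $2\cdot\Alt(n)$ and the statement for $2$-element preimages would survive unchanged. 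What your approach buys over the paper's bare citation is a self-contained, convention-checkable proof; what the citation buys is brevity, since the theorem is only a stepping stone to Corollary 3. Your closing remark is also accurate: the bookkeeping proof from Schur's presentation of the double cover yields the same sign $(-1)^{\binom{\ell}{2}}$, and either route would serve.
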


The previous result, which is part of Proposition 33.15 in \cite{A}, reduces our problem to $\Alt(n)$. Indeed:

\begin{corollary}\label{mainobserv} $\widetilde{\Alt(n)}$ is Hurwitz if and only if
$\Alt(n)$ admits a $(2,3,7)$-generating triple 
$(x,y,xy)$ in which $x$ is  the product of $4k$ cycles.
\end{corollary}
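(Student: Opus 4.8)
The plan is to work with the canonical projection $\pi\colon \widetilde{\Alt(n)}\to \widetilde{\Alt(n)}/\widetilde{Z}\cong \Alt(n)$, where $\widetilde{Z}=\langle z\rangle$ is central of order $2$, and to use Theorem~\ref{Asch} to pin down the order of a lift of the order-$2$ generator. Two elementary facts drive everything: first, $\widetilde{\Alt(n)}$ is perfect, hence has no subgroup of index $2$; second, for any $g\in\Alt(n)$ and any preimage $\widetilde g$, the relation $\widetilde g^{\,|g|}\in\widetilde Z$ together with $z^2=1$ forces $|\widetilde g|\in\{|g|,2|g|\}$, so $|g|$ divides $|\widetilde g|$.

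For the forward implication, suppose $\widetilde{\Alt(n)}=\langle \widetilde x,\widetilde y\rangle$ with $\widetilde x,\widetilde y,\widetilde x\widetilde y$ of orders $2,3,7$. Put $x=\pi(\widetilde x)$ and $y=\pi(\widetilde y)$; since $\pi$ is surjective, $\langle x,y\rangle=\Alt(n)$, so none of $x$, $y$, $xy$ is trivial (otherwise $\langle x,y\rangle$ would be cyclic, contradicting $n\geq 8$). As each image has order dividing the order of its preimage and is not $1$, the elements $x$, $y$, $xy$ have orders exactly $2,3,7$. Thus $(x,y,xy)$ is a $(2,3,7)$-generating triple of $\Alt(n)$ with $x$ an involution. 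Because $\widetilde x$ is a $2$-element of order $2$ (not $4$), Theorem~\ref{Asch} rules out $x$ being a product of $4k+2$ cycles, so $x$ is a product of $4k$ cycles.

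For the converse, let $(x,y,xy)$ be a $(2,3,7)$-generating triple of $\Alt(n)$ with $x$ a product of $4k$ cycles. First I would choose a preimage $\widetilde y$ of $y$ of order $3$: a preimage has order $3$ or $6$, and in the latter case replacing it by $\widetilde y z$ (which cubes to $z\cdot z=1$) gives order $3$. Next, for any preimage $\widetilde x$ of $x$ one checks $(\widetilde x z)^2=\widetilde x^2$, so $\widetilde x$ and $\widetilde x z$ are $2$-elements; since $x$ is a product of $4k$ cycles, Theorem~\ref{Asch} gives $|\widetilde x|=|\widetilde x z|=2$. Now $\pi(\widetilde x\widetilde y)=xy$ has order $7$, so $|\widetilde x\widetilde y|\in\{7,14\}$; if it equals $14$ then $(\widetilde x\widetilde y)^7=z$, and replacing $\widetilde x$ by $\widetilde x z$ yields $\bigl((\widetilde x z)\widetilde y\bigr)^7=(\widetilde x\widetilde y)^7 z^7=z\cdot z=1$, so after this adjustment $\widetilde x\widetilde y$ has order $7$ while $\widetilde x$ still has order $2$.

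Finally I would verify generation. Setting $H=\langle\widetilde x,\widetilde y\rangle$, we have $\pi(H)=\Alt(n)$, hence $H\widetilde Z=\widetilde{\Alt(n)}$ and $[\widetilde{\Alt(n)}:H]\leq 2$; an index-$2$ subgroup would be normal with a quotient of order $2$, contradicting perfectness, so $H=\widetilde{\Alt(n)}$ and $(\widetilde x,\widetilde y,\widetilde x\widetilde y)$ is a Hurwitz triple. I expect the main obstacle to be the simultaneous realization of all three orders on the lifts: the order-$3$ condition is automatic from coprimality with $|\widetilde Z|$, the order-$2$ condition is exactly where the hypothesis on the number of cycles (via Theorem~\ref{Asch}) is needed, and the order-$7$ condition is then secured by the optional replacement of $\widetilde x$ by $\widetilde x z$. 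Coordinating these three adjustments so that no one of them spoils another is the delicate point.
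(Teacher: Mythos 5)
Your proposal is correct and takes essentially the same route as the paper: lift $y$ to an element of order $3$, use Theorem~\ref{Asch} (which is where the $4k$-cycle hypothesis enters) to get an order-$2$ lift of $x$, multiply by the central involution when $(\widetilde x\widetilde y)^7$ is nontrivial, and invoke perfectness of $\widetilde{\Alt(n)}$ to upgrade surjectivity onto $\Alt(n)$ to full generation. The only difference is that you spell out the details the paper leaves implicit --- the forward direction (called ``obvious'' there) and the index-$2$ argument --- and both are carried out correctly.
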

\begin{proof}
Let $x,y$ be as in the statement. 
Any preimage $\tilde x$ of $x$ in $\widetilde{\Alt(n)}$ has order 2 by Theorem \ref{Asch}.  
Clearly $y$  has a preimage  $\tilde y$ of order 3.  If  $(\tilde x\tilde y)^7=-I$, the central 
involution of $\widetilde{\Alt(n)}$, we substitute $\tilde x$ with $-\tilde x$ 
so that $(-\tilde x \tilde y)^7=I$. As both groups $\left\langle \tilde x,\tilde y\right\rangle$  and  
$\left\langle -\tilde x,\tilde y\right\rangle$ map onto $\Alt(n)$, each of them coincides with the 
group $\widetilde{\Alt(n)}$, as it is perfect . We conclude that this
group admits a $(2,3,7)$--generating triple.
The converse is obvious.
\end{proof}
\medskip
Table 1 shows the  values of 
$n<168$ for which $\Alt(n)$ is Hurwitz:
this classification appears in \cite{C}
(mention of $139$, which does not satisfy (\ref{correction}), is omitted there).

\begin{center}
\begin{tabular}{|cccccccccccccc|}\hline
\multicolumn{14}{|c|}{\bf Table 1}\\ \hline
 &15&&&&&&21&22&&&&&\\
 28&29&&&&&&35&36&37&&&&\\
 42&43&&45&&&&49&50&51&52&&&\\
 56 &57 &58&&&&&63 & 64 & 65 & 66&&&\\
 70 &71 & 72&73&&&&77 & 78 & 79&80&81&&\\
 84 & 85 & 86 & 87&88&&&91 &92 & 93 & 94&&96&\\
 98 & 99 &100 & 101 & 102&&&105 & 106 & 107 & 108 & 109&&\\
 112 & 113& 114 & 115  &116  &117&&
 119  &120 & 121 & 122 & 123 & 124&\\
 126 & 127 & 128 & 129 & 130  &&132&
 133 & 134 & 135 & 136 & 137 & 138& \\
 140 &141 & 142 & 143 & 144 & 145&&
 147 & 148 & 149 & 150 & 151 & 152 &153\\
 154 & 155  &156 & 157 & 158 & 159 & 160&
 161 & 162  &163 & 164  &165 & 166&\\
 \hline
\end{tabular}
\end{center}

\section{Negative Results}
Assume that $\Alt(n)$ is Hurwitz. It follows that
\begin{equation}\label{correction}
2\Big[\frac{n}{4}\Big]+2\Big[\frac{n}{3}\Big]+6\Big[\frac{n}{7}\Big]\geq 2n-2.
\end{equation}
Similarly, if $\widetilde {\Alt(n)}$ is Hurwitz, then 
\begin{equation}\label{one}
4\Big[\frac{n}{8}\Big]+2\Big[\frac{n}{3}\Big]+6\Big[\frac{n}{7}\Big]\geq 2n-2.
\end{equation}
These inequalities follow almost immediately from \eqref{Conder},
but also from Scott's formula (details can be found in \cite[page 399]{TV}). 
To this respect, it is useful to note that
an involution $x\in \Alt(n)$ is the product 
of $\ell \leq 2[\frac{n}{4}]$ disjoint 2-cycles and,
if $x$ is the image an involution of $\widetilde {\Alt(n)}$, then 
$\ell \leq 4[\frac{n}{8}]$. 

The values of $n$ for which $\Alt(n)$ is Hurwitz, but do not satisfy \eqref{one} are:
$$
\begin{array}{ccccccccccccccc}
15 & 22 & 29 & 37& 45& 52& 71 & 79 & 86& 87& 94 &101&102&109 & 116\\ 
117 & 124& 132&143&151&158 & 159& 166 & 173&174&181 &188&215&223 &230  \\
\end{array}
$$

\begin{lemma}
The covering $\widetilde{\Alt(21)}$ is not Hurwitz.
\end{lemma}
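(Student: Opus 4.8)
The plan is to use Corollary \ref{mainobserv} together with the genus formula \eqref{Conder} to isolate the single cycle structure capable of lifting, and then to prove that $\Alt(21)$ carries no generating triple of that structure. First I would note that for $n=21$ the inequality \eqref{one} holds with \emph{equality}, since $4\cdot 2 + 2\cdot 7 + 6\cdot 3 = 40 = 2\cdot 21 - 2$; this is precisely why $21$ does not appear among the degrees ruled out by \eqref{one} and must be treated by hand. Writing $\ell=(21-r)/2$ for the number of transpositions of the involution $x$, Corollary \ref{mainobserv} and Theorem \ref{Asch} say that $\widetilde{\Alt(21)}$ is Hurwitz if and only if $\Alt(21)$ admits a $(2,3,7)$-generating triple $(x,y,xy)$ with $4 \mid \ell$, equivalently $r \equiv 5 \pmod 8$. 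Solving \eqref{Conder} with $n=21$, the constraint $84(g-1)\le 21$ forces $g \in \{0,1\}$, and the admissible data are $(g,r,s,t)=(1,1,0,0)$, $(0,5,0,0)$, $(0,1,3,0)$, giving $\ell = 10, 8, 10$ respectively. Only $(0,5,0,0)$ satisfies $4\mid \ell$. Hence it remains to prove that $\Alt(21)$ has no $(2,3,7)$-generating triple in which $x$ has type $2^8 1^5$, $y$ has type $3^7$ and $xy$ has type $7^3$; this configuration has genus $0$.

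The delicate point is that these very cycle types already occur inside proper subgroups, so neither Scott's formula nor \eqref{Conder} can see the difference. Indeed, in the action of $\Alt(7)$ on the $21$ unordered pairs of a $7$-set the image of a double transposition has type $2^8 1^5$, the image of $(123)(456)$ has type $3^7$, and the image of a $7$-cycle has type $7^3$; the same three types occur in $\mathrm{PSL}(3,4)$ acting on the $21$ points of $\mathrm{PG}(2,4)$, where an element of order $7$ has type $7^3$. Thus the statement to be proved is genuinely about the \emph{full} group generated, not about the existence of the cycle types.

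To finish I would show that a transitive group $G=\langle x,y\rangle$ of degree $21$ realizing these types is never $\Alt(21)$. If $G$ is imprimitive its blocks have size $3$ or $7$, so $G$ embeds in $S_3 \wr S_7$ or $S_7 \wr S_3$, each of order far below $|\Alt(21)|$, whence $G \ne \Alt(21)$; intransitive $G$ are irrelevant. This reduces matters to the primitive case, where I would invoke the classification of primitive groups of degree $21$: the primitive subgroups of $\Alt(21)$ are $\Alt(21)$, $\Alt(7)$ on pairs, and the groups with socle $\mathrm{PSL}(3,4)$. The main obstacle is to exclude $G=\Alt(21)$ itself. I expect to do this by a counting argument based on the Frobenius class-algebra formula: compute the number of pairs $(x,y)$ in $\Alt(21)$ with $x,y$ in the prescribed classes and $xy$ of type $7^3$, and show that every such pair already lies in a proper subgroup, so that none generates $\Alt(21)$. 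It is useful here that $\Alt(7)$ is not Hurwitz (the smallest Hurwitz alternating group is $\Alt(15)$), so $\Alt(7)$ on pairs cannot itself be the group generated, which channels the count through the imprimitive groups and the $\mathrm{PSL}(3,4)$-type groups. Carrying out this exact accounting — equivalently, verifying that the part of the structure constant of $\Alt(21)$ attributable to genuinely generating triples vanishes — is the crux, and is precisely the feature that makes $n=21$ the one exceptional degree invisible to Scott's formula.
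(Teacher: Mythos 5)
Your reduction to the single admissible type $(2^8 1^5,\,3^7,\,7^3)$ is correct (and finer than what is actually needed: since $4\mid\ell$ and $\ell\le 10$, one already gets that $x$ fixes at least $5$ points), but the heart of the lemma is left unproved. Everything hinges on excluding $G=\Alt(21)$ itself, and for that you offer only a plan: compute the Frobenius structure constant for these classes and ``show that every such pair already lies in a proper subgroup'' --- which is a restatement of the claim, with no evidence that the count of generating pairs vanishes. Executing it would require the character values of $\Alt(21)$ on these classes together with a complete, multiplicity-correct enumeration of all subgroups containing such triples (not only $\Alt(7)$ on pairs and the $\mathrm{PSL}(3,4)$-type groups, but also their subgroups and the imprimitive groups), a computation far beyond hand range and contrary to the paper's computer-free aim. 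A smaller unsupported step: you dismiss intransitive $G$ as ``irrelevant'' without argument; this is true, but requires the observation that $xy$ of type $7^3$ forces orbit sizes in $\{7,14,21\}$, while any orbit of size $7$ would give the fixed-point-free $y$ a fixed point.

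The missing idea is that Scott's formula \emph{does} detect the obstruction, once applied to a larger module; your assertion that ``neither Scott's formula nor \eqref{Conder} can see the difference'' is exactly where the plan goes astray. The paper applies Scott's formula to the symmetric square $S$ of the $20$-dimensional component $V$ of the natural permutation module: if the triple generated $\Alt(21)$, the fixed spaces of the generated group on $S$ and on its dual would be $1$-dimensional, giving $d^x_S+d^y_S+d^{xy}_S\le \frac{20\cdot 21}{2}+2=212$; but the cycle types (only ``$x$ fixes at least $5$ points'' is used) force $d^x_S\ge 114$, $d^y_S\ge 70$, $d^{xy}_S\ge 30$, for a total of $214$, a contradiction. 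Generation enters precisely through the fixed-space dimensions of $\langle x,y\rangle$ on the right-hand side --- which answers your (correct) observation that these cycle types occur inside proper subgroups --- and turns the crux you defer into a three-line character computation instead of an exact accounting over subgroups.
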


\begin{proof}
By contradiction let $(x,y,xy)$ be the image in $\Alt(n)$ of a $(2,3,7)$-generating triple 
of $\widetilde{\Alt(21)}$. It follows that $x$ fixes at least 5 points (actually $5$ by the genus formula).
Let $\C^{21}$ be the natural permutational module for  $\Alt(n)$ 
and $V$ its irreducible 20-dimensional component. 
Consider the diagonal action of $H=\left\langle x,y\right\rangle$ on the symmetric square $S$ of $V$
and, for $h\in H$, denote by $d^h_S$ the dimension of the space of points fixed by $h$.
Then Scott's formula gives:
$$d^x_S+d^y_S+d^{xy}_S\leq \frac{20\cdot 21}{2}+2.$$ 
Again, for details see \cite[Lemma 2.2 ]{VZ} or \cite[page 400]{TV}.
On the other hand we have $d^x_S\geq 114$,  $d^y_S\geq 70$ and $d^z_S\geq 30$,
whence the contradiction $114+70+30=214\leq 212$.
\end{proof}

\section{Proof of the result for almost all degrees}

Let $T(2,3,7)=\left\langle X,Y\mid X^2=Y^3=(XY)^7=1\right\rangle $ be the infinite triangle group.
In Conder's paper a permutation representation
$\mu: T(2,3,7)\to \Alt(m)$ is depicted by a diagram $M$, say, with $m$ vertices.
It will be convenient to say that $x=\mu(X)$ and $y=\mu(Y)$ are defined by $M$.
Assume that two vertices $j\neq k$ of $M$ form an  $(i)$-handle,  for some $i\leq 6$. This means that 
$j$ and $k$ are fixed by $x$ and that $(xy)^i$ takes $j$ to $k$. 
The following property is used repeatedly. 
Let $\mu': T(2,3,7)\to \Alt(m')$ be another representation, depicted by
$M'$. If $M'$ has  an $(i)$-handle $j',k'$, one obtains 
a new representation $T(2,3,7)\to \Alt(m+m')$ by extending the action:
$$X\mapsto \mu(x)\mu'(x)(j,j')(k,k'),\quad Y\mapsto  \mu(y)\mu'(y).$$
The diagram which depicts this representation is denoted by $M(i)M'$.

So the starting point of \cite{C} is a list of basic diagrams.
The corresponding transitive permutation representations that will be used here are given explicitly  
in \cite[Appendix A]{TV} and \cite[Appendix A]{V}.

\begin{lemma}\label{modification} 
In the notation of \emph{\cite{TV}}, let $x,y$ be defined by diagram $G$,
with  vertices $\left\{1,\dots ,42\right\}$ and $(1)$-handles
$\left\{2,3\right\},\ \left\{14,15\right\},\ \left\{32,33\right\}$.
Set 
$$x'=x\thinspace (14,32)(15,33) .$$
Then the product $x'y$ and the commutator 
$(x',y)$ are respectively conjugate to $xy$ and $(x,y)$.
\end{lemma}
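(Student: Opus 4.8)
The plan is to exploit the two $(1)$-handles $\{14,15\}$ and $\{32,33\}$, which by definition force $x$ to fix the four points $14,15,32,33$ and give $14^{y}=15$ and $32^{y}=33$ (here $p^{g}$ denotes the image of $p$ under $g$ and $x^{y}:=y^{-1}xy$). First I would record the two structural facts that make everything run. Since $x$ fixes the four points it moves, the pair $s:=(14,32)(15,33)$ commutes with $x$, so $x'=xs=sx$ is again an involution; and, writing $a:=15^{y}$ and $b:=33^{y}$, the relations $14^{y}=15$, $32^{y}=33$ identify the two relevant $y$-cycles as the disjoint $3$-cycles $(14,15,a)$ and $(32,33,b)$, so that $a,b\notin\{14,15,32,33\}$ and $a\ne b$.

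For the product $x'y$ I would introduce the single transposition $\tau:=(15,33)$ and use two elementary identities: $\tau$ commutes with $x$ (again because $x$ fixes $15$ and $33$), and conjugating $\tau$ by $y^{-1}$ carries $15,33$ to their $y$-preimages $14,32$, i.e. $\tau^{y^{-1}}=(14,32)$, whence
\[ s=(15,33)(14,32)=\tau\cdot\tau^{y^{-1}}=\tau\,(y\tau y^{-1}). \]
Substituting this into $x'y=xsy$ and cancelling the $y^{-1}y$ gives
\[ x'y=x\tau\,(y\tau y^{-1})\,y=x\tau y\tau=\tau\,(xy)\,\tau=(xy)^{\tau}, \]
so $x'y$ is conjugate to $xy$ (in particular it again has order $7$). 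I expect this half to be essentially formal: it uses only the handle relations, not the finer structure of the diagram $G$.

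The commutator is where the real work lies. Since $x^{2}=1$ one has $(x,y)=x\,x^{y}$ and $(x',y)=x'\,(x')^{y}$; expanding $x'=x\tau\tau^{y^{-1}}$ and conjugating by the same $\tau$ collapses the outer factors and yields
\[ (x',y)^{\tau}=x\,(14,32)\,x^{y}\,(a,b). \]
Thus the statement reduces to showing that $x\,(14,32)\,x^{y}\,(a,b)$ is conjugate to $(x,y)=x\,x^{y}$, i.e. that inserting the double transposition supported on $\{14,32,a,b\}$ is a cycle‑type‑preserving cut‑and‑rejoin. \textbf{The main obstacle} is precisely that, unlike the product case, this cannot be settled from the handle relations alone: it depends on the images $a^{x}=(15^{y})^{x}$ and $b^{x}=(33^{y})^{x}$ (equivalently on $x^{y}$ at $14$ and $32$), which the $(1)$-handle data do not determine. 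I would therefore read these images off the explicit $42$-point realisation of $G$ in \cite[Appendix A]{TV}, determine how $14,32$ and $a,b$ are distributed among the cycles of $x\,x^{y}$, and on that basis exhibit a conjugator realising the modification as a genuine conjugacy—the step where the Magma/GAP verification is decisive—thereby matching the cycle types of $(x',y)$ and $(x,y)$.
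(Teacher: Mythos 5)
Your first half is complete and correct, and in fact proves more than the paper records: from the handle relations alone ($x$ fixes $14,15,32,33$; $14^y=15$, $32^y=33$, so $y$ contains the disjoint $3$-cycles $(14,15,a)$ and $(32,33,b)$) you derive structurally that $x'y=(xy)^{\tau}$ with $\tau=(15,33)$. The paper's proof simply asserts, as a ``direct calculation'', that $(15,33)$ conjugates $xy$ to $x'y$ — the same conjugator, which your argument explains rather than verifies. Your reduction of the commutator is also correct: since $x'$ is an involution, $(x',y)=x'(x')^y$, and your identity $(x',y)^{\tau}=x\,(14,32)\,x^y\,(a,b)$ checks out.

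The genuine gap is that you stop exactly where the lemma's real content begins: you never establish that $x\,(14,32)\,x^y\,(a,b)$ is conjugate to $x\,x^y$, but only announce that you \emph{would} read the needed data off the explicit realisation of $G$ and \emph{would} exhibit a conjugator. You are right that this step cannot follow from the handle relations alone — it depends on $a^x$ and $b^x$, which the $(1)$-handle data do not constrain, and this matches the paper's reliance on direct calculation. But the paper does execute the step: it exhibits an explicit conjugator, the $19$-cycle $(35,17,31,32,34,16,37,28,30,21,20,8,18,25,10,27,23,24,41)$, taking $(x,y)$ to $(x',y)$, and records that both commutators have cycle structure $13^3\,1^3$, with the three $13$-cycles containing $2$, $14$, $32$ respectively. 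That cycle-structure statement is precisely what Section 4 later uses (so that $\left\langle x',y\right\rangle=\Alt(n)$ follows by the same Jordan-type argument as for $\left\langle x,y\right\rangle$); leaving it as a promissory note means the harder half of the lemma is reduced but not proved. Note also that mere equality of cycle types in $\mathrm{Sym}(42)$ would suffice for the conclusion as stated and as used — so your proposed ``cut-and-rejoin'' analysis of how $14,32,a,b$ sit in the cycles of $x\,x^y$ is a viable route — but it still has to be carried out on the explicit diagram, and your proposal does not do so.
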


\begin{proof}
Direct calculation shows that $(15,33)$ conjugates $xy$ to $x'y$, and
$(35,17,31,32,34,16,37,28,30,21,20,8,18,25,10,27,23,24,41)$ conjugates $(x,y)$ to 
$(x',y)$.  We note that the cycle structure of both commutators
$(x,y)$ and $(x',y)$ are $(2,\dots ,1,\dots  )(14,\dots ,13,\dots  )(32,\dots
,31,\dots  ) 1^3$, where each non-trivial cycle has length 13.
\end{proof}

As $x'y$ has order $7$, we may denote by $G'$ the diagram which depicts the representation 
$X\mapsto x'$, $Y\mapsto y$ of the previous Lemma.

\medskip
In Table 2 we list each basic diagram that will be needed, with its degree,
and the number $m$ of $2$-cycles of the corresponding involution $x$. 
When  a suitable power of the commutator $(x,y)$ is a cycle of prime length $p$,
we indicate explicitly this prime (also called \lq\lq useful\rq\rq). We use the notation of 
\cite[Appendix A]{TV}  for the diagrams 
called $G$, $A$, $E$, $H_i$ ($0\leq i\leq 13$),  the notation of \cite{C} 
for $B$, $C$, $D$, $J$  and that of \cite[Appendix A]{V} for the remaining ones.

\begin{center}
\begin{tabular}{|cccc|cccc|}\hline
\multicolumn{8}{|c|}{\bf Table 2}\\\hline
Diagram   & $\deg$ & $m$ & $p$ & Diagram   & $\deg$ & $m$ & $p$\\\hline
$A$ & $14$ & $6$ & {} & $C$ & $21$ & $8$ & {} \\
$B$ & $15$ &  $6$ & {}& $E$  &  $28$ & $12$ & {}\\
$D$ & $22$ & $10$ & {} &  $G'$ & $42$ & $20$ & {}\\
$G$ & $42$ & $18$ & {} & $H_2$ & $ 142$ & $68$ & $23$ \\
$H_0$ & $42$ & $18$ & $17$ & $H_3$ & $115$ & $56$ & $17$\\   
$H_1$ & $57$ & $26$ & $5$  & $H_5$ & $187$ & $92$ & $43$ \\
$H_4$ & $144$ & $70$ & $17$ & $H_7$ & $77$ & $36$ & $17$\\
$H_6$ & $216$ & $106$  & $5$ & $H_8$ & $36$ & $16$ & $5$\\
$H_{10}$& $136$ & $66$ & $5$ & $H_9$ & $135$ & $64$ & $19$\\
$J$ & $72$ & $34$ & {} & $H_{11}$ & $165$ & $80$ & $19$ \\
$O$ & $7$ & $2$ & {} & $H_{12}$ & $180$ & $88$ & $47$ \\
$P$ & $15$ & $6$& {} & $H_{13}$ & $195$ & $96$ & $23$ \\
$R$ & $22$ & $10$  & {} & $Q$ & $21$ & $8$ & {} \\
{} & {} & {} & {} & $S$ & $36$ &$ 16$ & {} \\
{} & {} & {} & {} & $T$ & $66$ & $32$& {}\\\hline
\end{tabular}
\end{center}

For each $H_i$ in Table 2, we define three composite diagrams, namely:
$$\Omega_{0}^i=H_i(1)G,\quad \Omega_{1}^i=\lhand{H_i(1)}{A(1)}G,\quad \Omega_{2}^i=\lhand{H_i(1)}{E(1)}G.$$
Here we mean that $\Omega_{1}^i$ is obtained by two joins. The first is done via the $(1)$-handle $\{2,3\}$ of $G$ 
and the $(1)$-handle of $H_i$. The second 
via the $(1)$-handle $\{14,15\}$ of $G$ and  the $(1)$-handle of $A$. Similarly for $\Omega_{2}^i$.

We are now ready to prove Theorem \ref{main} for all values of $n$ of shapes:
\begin{equation}\label{shape}
\begin{array}{ccc}
n= 42+d ,\hfill&&\\
\noalign{\medskip}
n= 42r+14s+d,& r \geq 2,& s=0,1,2,
\end{array}
\end{equation}
where $d$ is the degree of the unique diagram $H_i$ such that $n\equiv i\pmod {14}$.
For any such $n$, there exists a  composite diagram of degree $n$: e.g. one of the diagrams
$\Omega_0^i$ or 
$$\Omega_j^i(1)\underbrace{G(1)\dots (1)G}_{r-1\ {\rm times}},\quad j:=0,1,2.$$
If $x,y$ are defined by this diagram, then $\left\langle x,y\right\rangle$ is a primitive subgroup of $\Alt(n)$ 
and a power of the commutator $(x,y)$ is a $p$-cycle of prime length $p\leq n-3$ (see \cite{C}).
By a result of Jordan \cite{J}, $\left\langle x,y\right\rangle =\Alt(n)$.

Let $m$ be the number of  $2$-cycles of $x$. If $m\equiv 0\pmod 4$, by  Corollary  \ref{mainobserv}
the group $\widetilde{\Alt(n)}$ is Hurwitz. Otherwise we may 
consider the diagram obtained substituting the last copy of $G$ by $G'$.
The number of  $2$-cycles of the involution $x'$ defined by this modified diagram is 
$m+2\equiv 0\pmod 4$. 
It follows from Lemma \ref{modification} that the cycle structure of  $(x',y)$ is the same of
$(x,y)$. This allows to conclude that $\left\langle x',y\right\rangle = \Alt(n)$ by the same argument
used for $\left\langle x,y\right\rangle$.
 
Note that every $n\geq 300$ and the values listed below have shape \eqref{shape}.
$$
\begin{array}{cccccccccccccc}
78 & 84 &  99 & 119 & 120& 126& 134& 140& 141& 148& 154& 155 & 157& 161\\
162 & 168 & 169& 175 & 176& 177&  178 &  182&183& 184& 186 & 189& 190& 196\\
197& 199 & 203& 204& 207 & 210& 211& 213 & 217& 218& 219& 220 &  222 & 224\\
225& 226& 227 & 228& 229& 231& 232& 233& 234& 237 &238& 239& 240& 241 \\
242& 245& 246& 247 & 248& 249& 252& 253& 254 & 255& 256& 258 & 259& 260 \\
261& 262& 263& 264& 266 & 267& 268&269& 270 & 271& 273& 274& 275& 276\\
 277& 278& 279& 280& 281& 282 &283& 284 & 285&287& 288& 289& 290& 291\\
292& 293& 294& 295& 296& 297 & 298 &299\\
\end{array}
$$

\section{The remaining cases}\label{pos}

In this Section, for each remaining degree $n$, we give a diagram which defines a $(2,3,7)$ triple
$(x,y,xy)$ such that $x$ is the product of $m\equiv 0\pmod 4$ disjoint $2$-cycles, 
$\left\langle x,y\right\rangle$ is a primitive subgroup of 
$\Alt(n)$ which contains a $p$-cycle (called useful) of prime length $p\leq n-3$. 
As above we conclude that the group $\widetilde{\Alt(n)}$ is 
Hurwitz.

We first consider diagrams which 
involve an $H_i$, for some $i$ with $0\leq i\leq 13$. In accordance with Table 2,
it is convenient to split this interval in the two subsets: 
$$I_1=\{0,1,4,6,10\},\quad I_2=\{2,3,5,7,8,9,11,12,13\}.$$
So, for the number $m$ of $2$-cycles defined by an $H_i$,
we have $m\equiv 2\pmod 4$ if $i\in I_1$ and $m\equiv 0\pmod 4$ if 
$i\in I_2$. 

Hence, for each $i_1\in I_1$ and each $i_2\in I_2$, we consider the diagrams:
$$H_{i_1}(1)E,\quad H_{i_2}, \quad  O(1)H_{i_2}, \quad A(1)H_{i_2},\quad R(1)H_{i_2} $$
of respective degrees $\deg \left(H_{i_1}\right)+28$, $\deg \left(H_{i_2}\right)+k$ $(k=0,7,14,22)$.

They provide the following 
values of $n$ (omitting those already obtained):
$$\begin{array}{cccccccccccccccc}
36 &  43 & 50 & 58 & 70 & 77 &  85 & 91 &  115 & 122 & 129 & 135 & 137 & 142 \\
149 & 156 & 164 & 165 & 172 & 179 & 180 & 187& 194& 195 &201& 202& 209&  244.
\end{array}$$

Similarly, for $i_1\in I_1$ and $i_2\in I_2$,  the diagrams
$$\lhand{H_{i_1}(1)}{E(1)}G,\quad \lhand{H_{i_2}(1)}{A(1)}G,\quad P(1)G(1)H_{i_2},\quad \lhand{A(1)}{A(1)}G(1)H_{i_2}$$
of respective degrees $\deg \left(H_{i_1}\right)+42+28$, $\deg \left(H_{i_2}\right)+42+k$ $(k=14,15,28)$
give the new values:
$$\begin{array}{cccccccccccccc}
92 & 93& 106 &  112&  127& 133 & 147 & 171&  185 &   191& 192 & 198& 205\\
 206&  212& 214& 221& 235 & 236&  243& 250 & 251& 257 &  265 &286 .
\end{array}$$
Moreover the diagrams
$$P(1)H_3,\;\; P(1)H_9,\;\;\lhand{R(1)}{H_8(1)}G, \;\; \lhand{A(1)}{P(1)}G(1)H_8, \;\; \lhand{A(1)}{R(1)}G(1)H_8,\;\; \lhand{P(1)}{P(1)}G(1)H_8,$$
give:
$$\begin{array}{cccccc}
100 & 107& 108& 114 & 130 & 150.
\end{array}$$

For each of these diagrams, a suitable power of the commutator $(x,y)$ is the $p$-cycle
listed in Table 2, associated to the $H_i$ involved by the diagram.

Next we consider the diagrams listed in Table 3, where
the $p$-cycle is the word described in the fourth column.

\begin{center}
\begin{tabular}{|cccc|} \hline 
\multicolumn{4}{|c|}{\bf Table 3}\\ \hline
$n$ & Diagram & $p$ & $p$-cycle \\ \hline
$66$ & $T$ &  $47$ & $(xy^2xyxy^2xyxy)^{44}$\\
$28$ & $O(1)Q$ & $13$ & $(xy^2xyxyxy^2)^{24}$ \\
$35$ & $O(1)E$ & $17$ & $(xy^2xyxy^2xy^2xy^2xyxy)^{77}$\\
$42$ & $A(1)E$ & $11$ & $(xy^2xyxy^2xyxy^2xyxy)^{60}$\\
$49$ & $O(1)G'$ & $19$ & $(x,y)^{13}$ \\
$51$ & $P(1)H_8$ & $11$ & $(x,y)^{100}$ \\
$57$ & $P(1)G'$ & $23$ & $(xy^2xy^2xy^2xyxyxy^2xy^2xyxy)^{70}$\\
$63$ & $G(1)C$ & $13$ & $(xy^2xyxy^2xyxy^2xyxy)^{210}$ \\
$64$ & $R(1)G'$ & $17$ & $(xyxy^2xyxyxy^2xyxy^2)^{30}$\\
$73$ & $O(1)T$ & $47$ & $(xyxy^2xy^2xy^2xy^2xyxyxy^2xyxy^2xyxy^2xy^2)^{84}$ \\
$80$ & $A(1)T$ & $23$ & $(xy^2xyxyxy^2xyxy^2xyxy^2xyxy^2xy)^{168}$ \\
$81$ & $P(1)T$ & $67$ & $(xyxy^2xy^2xyxy^2xyxy^2xy^2xyxyxy^2xy)^7 $ \\ 
$88$ & $R(1)T$ & $71$ & $(xyxy^2xy^2xyxyxy^2xyxy^2xy)^{12}$ \\
$123$ & $H_1(1)T$ & $23$ & $(xyxy^2xy^2xyxy^2xy^2xy)^{1872}$\\
$138$ & $J(1)T$ & $13$ & $(xyxy^2xy^2xy)^{228}$ \\ 
$105$ & $C(1)G(1)G$ & $19$ & $(xyxy^2xy^2xy^2xyxy^2xy^2xyxy)^{210}$\\
$121$ & $O(1)J(1)G'$ & $17$  &   $(x,y)^{17160}$  \\
$128$ & $A(1)J(1)G'$ & $17$  & $(xyxy^2xy^2xyxyxy^2xyxy^2xy)^{390}$\\
$136$ & $R(1)J(1)G'$ & $23$ & $(xyxy^2xyxy^2xyxy^2xy)^{11970}$\\
$145$ & $O(1)J(1)T$ &  $17$  &   $(x,y)^{8360}$\\
$152$ & $A(1)J(1)T$ & $83$ & $(xyxy^2xy^2xyxyxy^2xy^2xyxy^2xyxy^2)^{828} $\\
$153$ & $P(1)J(1)T$ & $53$ & $(xyxy^2xy^2xyxy^2xyxy^2)^{690}$\\
$160$ & $R(1)J(1)T$ & $11$ & $(xyxy^2xy^2xyxyxy^2xyxy^2xy)^{9300}$\\
$163$ & $A(1)J(1)H_7$ & $17$ & $(x,y)^{3960}$\\ 
$98$ & $\lhand{A(1)}{A(1)}G(1)E$ & $19$ & $(xyxy^2xy^2xyxy^2xy^2xyxy)^{660}$\\
$113$ & $\lhand{A(1)}{P(1)}G(1)G'$ & $23$&  $(xyxy^2xy^2xy^2xyxy^2xyxy^2xyxyxy)^{70}$ \\
$144$ & $\lhand{A(1)}{R(1)}G(1)T$ & $61$ & $(xyxy^2xyxy^2xyxy^2xy)^{690}$ \\
$170$ & $\lhand{A(1)}{J(1)}G(1)G'$ & $23$ & $(xyxy^2xyxy^2xyxy)^{5460}$ \\
$193$ & $\lhand{A(1)}{R(1)} G(1)H_3$ & $29$ & $(xyxy^2xyxy^2xyxy)^{2520}$\\
$200$ & $\lhand{R(1)}{R(1)}G(1)J(1)G' $& $47$ & $(xyxy^2xy^2xyxy^2xyxy^2)^{6930}$ \\
$208$ & $\lhand{A(1)}{A(1)}G(1)J(1)T$ & $7$ &  $(xyxy^2xyxy^2xyxyxy^2)^{150}$ \\
$216$ & $\lhand{A(1)}{R(1)}G(1)J(1)T$ & $7$ & $(xyxy^2xyxy^2xyxy^2xy)^{330}$\\
$272$ & $\lhand{R(1)}{R(1)}G(1)J(1)J(1)G'$ & $17$ & $(xyxy^2xyxyxy^2xyxy^2xy^2xy^2xy)^{155610}$\\\hline
\end{tabular}
\end{center}
Using the $(2)$-handles of the diagrams $B$, $D$ and $S$ we solve the cases $n=65,72$. 
Indeed, we construct the diagrams $B(2)S(1)A$ and $D(2)S(1)A$: it turns out that in the first case the word 
$$(xyxy^2xy^2xyxyxy^2xy^2xyxy^2xy)^{3}$$
is a $59$-cycle and in the second case the word 
$$(xyxy^2xy^2xyxy^2xy^2xyxy^2xyxy^2xyxy)^{140}$$
is a $41$-cycle.
 We conclude with the last two cases $n=56, 96$, giving explicitly the generators.

\medskip
\underline{$n=56:$} 
\begin{eqnarray*} x & =&  (1,52)(2,6)(3,7)(4,53)(5,9)(8,12)(10,15)(11,13)(14,18)(16,21)\\
{} & {} & (17,22)(19,24)(20,34)(23,27)(25,30)(26,32)(28,33)(29,41)(31,36)\\
{} & {}&(35,54)(37,42)(38,40)(39,45)(43,48)(44,49)(46,51)(47,56)(50,55).\\
y & = & \prod_{i=0}^{16}  (3i+1,3i+2,3i+3) .
\end{eqnarray*}
Useful prime: $p=41$; $p$-cycle: $(xyxyxy^2xy^2xyxy^2xyxy^2xy^2)^{13}$.

\medskip
\underline{$n=96$} ({\cite{C2}):
\begin{eqnarray*}
x & = & (1, 2)(3, 4)(5, 7)(6, 10)(8, 13)(9, 16)(11, 19)(12, 14)(15, 22)(17, 25)\\
{} & {} & (18, 28)(20, 23)(21, 31)(24, 30)(26, 34)(27, 37)(29, 35)(32, 33)(36, 40)\\
{} & {} &(38, 43)(39, 46)(41, 48)(42, 49)(44, 52)(45, 55)(47, 58)(50, 56)(51, 53)\\
{} & {} & (54, 61)(57, 64)(59, 67)(60, 70)(62, 63)(65, 72)(66, 68)(69, 73)(71, 76)\\
{} & {} & (74, 79)(75, 82)(77, 85)(78, 88)(80, 90)(81, 91)(83, 89)(84, 86)(87, 94)\\
{} & {} & (92, 93)(95, 96). \\
y & = & \prod_{i=0}^{31} (3i+1,3i+2,3i+3) . 
\end{eqnarray*}
Useful prime: $p=59$; $p$-cycle: $(xyxy^2xyxyxy^2xyxy^2)^{420}$.

\end{document}